\newcolumntype{C}{ >{\centering\arraybackslash} m }
\newcommand{\R}{\mathbb R}
\newcommand\veb{{\ve b}}
\newcommand\vecc{{\ve c}}
\newcommand\ved{{\ve d}}
\newcommand\veg{{\ve g}}
\newcommand\veu{{\ve u}}
\newcommand\vex{{\ve x}}
\newcommand\vey{{\ve y}}
\newcommand\vez{{\ve z}}
\newcommand{\vedelta}{\bm{\delta}}
\theoremstyle{definition}
\newtheorem{definition}{Definition}
\theoremstyle{remark}
\newtheorem{remark}{Remark}
\theoremstyle{plain}
\newtheorem{theorem}{Theorem}
\newtheorem{proposition}{Proposition}
\newtheorem{lemma}{Lemma}
\DeclareMathOperator{\rank}{rank}
\DeclareMathOperator{\poly}{poly}
\newcommand{\PP}{{\sf P}\xspace}
\newcommand{\NP}{{\sf NP}\xspace}
\newcommand{\NPh}{\hbox{{\sf NP}-hard}\xspace}
\newcommand{\CC}{\mathcal{C}}
\newcommand{\Oh}{\mathcal{O}}
\def\ve#1{\mathchoice{\mbox{\boldmath$\displaystyle\bf#1$}}
{\mbox{\boldmath$\textstyle\bf#1$}}
{\mbox{\boldmath$\scriptstyle\bf#1$}}
{\mbox{\boldmath$\scriptscriptstyle\bf#1$}}}
\theoremstyle{plain}
\newtheorem*{rep@theorem}{\rep@title}
\newcommand{\newreptheorem}[2]{%
	\newenvironment{rep#1}[1]{%
		\def\rep@title{#2 \ref{##1}}%
		\begin{rep@theorem}}%
		{\end{rep@theorem}}}
\newlength{\RoundedBoxWidth}
\newsavebox{\GrayRoundedBox}
\newenvironment{GrayBox}[1]%
{\setlength{\RoundedBoxWidth}{\columnwidth}
\addtolength{\RoundedBoxWidth}{-3ex}
\addtolength{\RoundedBoxWidth}{-.5pt}
	\def\boxheading{#1}
	\begin{lrbox}{\GrayRoundedBox}
		\begin{minipage}{\RoundedBoxWidth}}%
		{   \end{minipage}
	\end{lrbox}
	\begin{center}
		\begin{tikzpicture}%
		\node(Text)[draw=black,fill=white,inner sep=1.5ex,text width=\RoundedBoxWidth]
		{\usebox{\GrayRoundedBox}};
		\coordinate(x) at (current bounding box.north west);
		\node [draw=white,rectangle,inner sep=3pt,anchor=north west,fill=white]
		at ($(x)+(6pt,.75em)$) {\boxheading};
		\end{tikzpicture}
\end{center}}
\newenvironment{defproblemx}[1]{\noindent\ignorespaces%
	\FrameSep=6pt%
	\parindent=0pt%
	                 \vspace*{-1em}
	\begin{GrayBox}{\textsc{#1}}%
		\vspace*{.2em}
		\begin{tabular*}{\RoundedBoxWidth}{@{\extracolsep{\fill}} >{\itshape} p{1.9cm} p{0.75\columnwidth} @{\hfill}}%
		}{
		\end{tabular*}%
	\end{GrayBox}%
	\ignorespacesafterend
}
\newcommand{\defProblemFind}[3]{%
	\begin{defproblemx}{#1}
		Input: & #2 \\
		Find: & #3
	\end{defproblemx}
}
\newcommand{\defProblemDecide}[3]{%
	\begin{defproblemx}{#1}
		Input: & #2 \\
		Decide: & #3
	\end{defproblemx}
}
\newcommand{\mytodo}[2]{\todo[size=\tiny, color=#1!50!white]{#2}\xspace}
\newcommand{\mkcom}[1]{\mytodo{orange}{#1}}
\begin{document}
\journal{Operations Research Letters}

\begin{frontmatter}
\title{A Note on the Approximability of \\Deepest-Descent Circuit Steps}

\author[1]{Steffen Borgwardt}
\ead{steffen.borgwardt@ucdenver.edu}

\author[2]{Cornelius Brand\corref{cor1}}
\cortext[cor1]{Corresponding author}
\ead{cbrand@iuuk.mff.cuni.cz}
		
\author[2]{Andreas Emil Feldmann}
\ead{feldmann.a.e@gmail.com}

\author[2]{Martin Koutecký}
\ead{koutecky@iuuk.mff.cuni.cz}

\address[1]{Department of Mathematical and Statistical Sciences,
University of Colorado Denver \\
P.O. Box 173364, CB 170 \\
Denver, CO 80217-3364 USA}

\address[2]{Univerzita Karlova,
Matematicko-fyzikální fakulta,
Informatický ústav Univerzity Karlovy \\
Malostranské nám. 25 \\
118 00 Praha 1, Czech Republic}

\begin{keyword} 
circuits \sep linear programming \sep deepest-descent steps \sep complexity theory
\end{keyword}
\begin{abstract}
Linear programs (LPs) can be solved by polynomially many moves along the circuit direction improving the objective the most, so-called deepest-descent steps (dd-steps).
Computing these steps is NP-hard~(De Loera et al., arXiv, 2019), a consequence of the hardness of deciding the existence of an optimal circuit-neighbor (OCNP) on LPs with non-unique optima.

We prove OCNP is easy under the promise of unique optima,
but already $O(n^{1-\varepsilon})$-approximating dd-steps remains hard even for totally unimodular $n$-dimensional 0/1-LPs with a unique optimum.
We provide a matching $n$-approxi\-ma\-tion.

%
%
\end{abstract}

\end{frontmatter}

\section{Introduction}
Linear programming is a fundamental tool in both the theory and applications of combinatorial optimization: We are given a system $A\vex = \veb, \, B\vex \leq \ved$ with $A \in \R^{m_A \times n}, B \in \R^{m_B \times n}, \veb \in \R^{m_A}$ and $\ved \in \R^{m_B}$ and a cost vector $\vecc \in \R^n$.
We call an assignment $\vex \in \R^n$ to the variables \emph{feasible} if it satisfies the system of equalities and inequalities, and the set of these feasible assignments is a \emph{polyhedron}, which will be denoted as $P$ throughout. The goal is to find a feasible assignment $\vex \in \R^n$ minimizing $\vecc^T \vex$.

Linear programming has been known to be solvable in weakly-polynomial\footnote{From here on out, whenever we speak of a problem with instances containing numbers as inputs as being solvable in polynomial time, we intend this to mean weakly-polynomial time, unless explicitly stated otherwise.} time since the groundbreaking work of Khachiyan on the ellipsoid method \cite{khachiyan1979polynomial} and Karmarkar on the interior point method \cite{karmarkar1984new}. The existence of a strongly polynomial algorithm for linear programming, that is, an algorithm which makes $\poly(n, m_A, m_B)$ arithmetic operations and finds an optimal solution, is a major open problem.
Exploring methods other than the ellipsoid and interior point methods is a possible pathway for a resolution of this important open problem.

One such family of methods are iterative augmentation methods~\cite{dhl-15} using the circuits of the matrix pair $A,B$, which are defined as follows:
\begin{definition}\label{def:circuits}
	Given matrices $A,B$, the set of \textit{circuits} $\mathcal{C}(A,B)$ consists of all $\veg \in \ker(A) \setminus \{ \ve 0 \}$ normalized to coprime integer components for which $B \veg$ is support-minimal over $\{B \vex \mid \vex \in \ker(A) \setminus \{\ve0 \} \}$. 
\end{definition}
The set $\mathcal{C}(A,B)$ is, for instance, the set of all \emph{potential} edge directions, arising from any polyhedron having $A$ and $B$ as their constraint matrices over the varying choices of the right-hand sides $\veb$ and $\ved$. This set contains the set of set of \emph{actual} edge directions appearing on $P$ with $\veb$ and $\ved$ fixed as a subset. To be precise, by an \emph{edge direction}, we mean any (normalized) vector in a one-dimensional subspace spanned by the set of optimal points with respect to some cost vector.
This means that considering all circuits in each iteration gives a potentially larger improvement than by only 
considering the edge directions, as is the case in the Simplex method.

The set of circuits $\mathcal{C(P)}$ of the polyhedron $P = \{ \vex \in \R^n \mid A \vex = \veb, B \vex \leq \ved \}$ satisfies $\mathcal{C(P)}=\mathcal{C}(A,B)$. A generic iterative augmentation method for linear programming over $P$ starts from some initial 
feasible 
solution $\vex^0\in P$ and then, in the $i$-th 
iteration with $i=0, 1, \dots$, finds a circuit $\veg^i \in \CC(A,B)$ and a 
step-length $\lambda^i \in \R_+$ such that $\vex^{i+1} = \vex^i + \lambda^i \veg^i$ is 
feasible and $\vecc^T \veg^i < 0$. The specific choice of $\lambda^i$ and $\veg^i$ distinguishes the individual methods.
For example, a \emph{steepest-descent step} is one which minimizes $\vecc^T \veg^i / \|\veg^i\|_1$, where $\|\cdot\|_1$ is the $1$-norm, and then chooses the largest feasible step-length $\lambda^i$ with respect to $\veg^i$. A \emph{deepest-descent step} is one which minimizes $\lambda^i \vecc^T \veg^i$.

This gives rise to the circuit diameter conjecture \cite{bfh-14}, which states that for any $d$-dimensional polyhedron with $f$ facets, the circuit diameter is bounded above by $f-d$; the circuit diameter is the smallest number of feasible circuit steps of maximal length between two points of a polyhedron.
The significance of studying the circuit-based iterative augmentation methods is also highlighted by recent success of Graver bases in the design of integer programming algorithms~\cite{EisenbrandHKKLO:2019}, since a Graver basis is essentially the integer programming analogue of the set of circuits. 

Throughout this paper, we consider polyhedra in the general form $P = \{ \vex \in \R^n \mid A \vex = \veb, B \vex \leq \ved \}$, just as we already did up to this point. 
We assume that $P$ is pointed, i.e., it has a vertex. This is required for some of our problem statements to be well-defined. A check whether $P$ is pointed can be done efficiently through elementary linear algebra.

Let us formally define a deepest-descent step:
\begin{definition}
	Let $P = \{ \vex \in \R^n \mid A \vex = \veb, B \vex \leq \ved \}$, let $\vecc\in \mathbb{R}^n$ and $\vex^0\in P$, and consider the LP $\min\{\vecc^T\vex: \vex\in P\}$. A {\em $\vecc$-deepest-descent step $\vey$ from $\vex^0$} is a vector $\vey=\alpha \cdot \veg$ for some circuit $\veg \in \mathcal{C}(A,B)$ that maximizes the objective function improvement $-\vecc^T(\alpha \veg)$ among all circuits $\veg \in \mathcal{C}(A,B)$ and all $\alpha > 0$ with $\vex^0+\alpha\veg \in P$. 
\end{definition}
When the context is clear, we simply refer to a deepest-descent step $\vey$ (dd-step), dropping information about $\vecc$, $P$, or $\vex^0$. We call the term $c_{\vey} = -\vecc^T\vey$ the {\em deepest-descent improvement} (dd-improvement).
It is known that repeatedly taking deepest-descent steps converges to an optimal solution in $\Oh(n \log(\veb, \vecc, \ved))$ iterations~\cite{dhl-15}.
A \emph{$k$-approximate dd-step} $\vez$ is a circuit step whose improvement is at least $1/k$ of the improvement of a dd-step, as measured by the objective value $c_{\vey}$ of a dd-step versus $\vecc^T\vez$.
It is known~\cite{AltmanovaKK:2018,dks-19} that iteratively augmenting $k$-approximate dd-steps takes at most $k$-times more iterations to converge to an optimum.
Thus, we are interested in exact and approximate computations of a deepest-descent step. We formally denote this search as follows.

\defProblemFind{Deepest-Descent Step Problem (dd-SP)}
{$\vecc\in \mathbb{R}^n$, polyhedron $P\subset\mathbb{R}^n$, $\vex^0\in P$}
{$\vecc$-deepest-descent circuit step $\vey$ in $P$ from $\vex^0$.}
The natural question leading to our results is then: How hard is it to compute a dd-step?

\subsection{Our Contribution}
Our first positive result with respect to this question pertains to the efficient approximability of \textsc{dd-SP}:
\begin{theorem}\label{thm:deepest-descent-approximation}
	\textsc{dd-SP} can be approximated within a factor of $n$ in polynomial time.
\end{theorem}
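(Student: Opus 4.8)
The plan is to reduce \textsc{dd-SP} to solving one linear program and then recover an $n$-approximate dd-step from the LP optimum via a conformal circuit decomposition. First I would solve $\min\{\vecc^T\vex : \vex\in P\}$; if $\vex^0$ is already optimal nothing needs to be done, and if the LP is unbounded the dd-improvement is infinite and one outputs an unbounded improving circuit direction (obtained from a recession direction by the same decomposition used below). Otherwise one obtains an optimal vertex $\vex^\star$ with $\vecc^T\vex^\star<\vecc^T\vex^0$; set $\vew^\star:=\vex^\star-\vex^0\in\ker(A)$ and note that $-\vecc^T\vew^\star\ge c_\vey$ for every dd-step $\vey$, since $\vex^0+\vey\in P$ and $\vex^\star$ is LP-optimal.

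The structural heart of the argument is the claim that $\vew^\star$ decomposes as $\vew^\star=\sum_{k=1}^t\mu_k\veg_k$ with $t\le n$, all $\mu_k>0$, all $\veg_k\in\mathcal{C}(A,B)$, and each $B\veg_k$ sign-compatible with $B\vew^\star$ (so that no cancellation occurs among the $\mu_k(B\veg_k)_i$), and that such a decomposition is computable in polynomial time. I would derive this from the fact that, since $P$ is pointed (equivalently $\ker(A)\cap\ker(B)=\{\ve0\}$), the map $\veg\mapsto B\veg$ is injective on $\ker(A)$ and the circuits in $\mathcal{C}(A,B)$ are exactly the extreme rays of the pointed polyhedral cones obtained by intersecting $\ker(A)$ with a fixed sign condition on the coordinates of $B\veg$. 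The desired decomposition is then just a Carathéodory representation of $\vew^\star$ in the cone cut out by the sign pattern of $B\vew^\star$, so $t\le\dim\ker(A)\le n$ by Carathéodory's theorem for cones, and a representation of this size is found by the standard iterated-LP procedure.

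Next I would check that each term $\mu_k\veg_k$ is already a feasible circuit step from $\vex^0$. Since $\vex^0\in P$ we have $\ved-B\vex^0\ge\ve0$, and since $\vex^\star\in P$ we have $B\vew^\star\le\ved-B\vex^0$ coordinatewise. Fixing a row $i$: if $(B\vew^\star)_i\le0$ then sign-compatibility gives $\mu_k(B\veg_k)_i\le0$, hence $B_i(\vex^0+\mu_k\veg_k)\le B_i\vex^0\le\ved_i$; if $(B\vew^\star)_i>0$ then absence of cancellation gives $0\le\mu_k(B\veg_k)_i\le(B\vew^\star)_i\le\ved_i-B_i\vex^0$, hence again $B_i(\vex^0+\mu_k\veg_k)\le\ved_i$. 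Together with $A\veg_k=\ve0$ this yields $\vex^0+\mu_k\veg_k\in P$ for every $k$.

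The argument then concludes by averaging: $\sum_{k=1}^t\bigl(-\vecc^T(\mu_k\veg_k)\bigr)=-\vecc^T\vew^\star\ge c_\vey$ with at most $n$ summands, so some index $k^\star$ satisfies $-\vecc^T(\mu_{k^\star}\veg_{k^\star})\ge c_\vey/n$; being a feasible circuit step from $\vex^0$ that is strictly improving whenever $c_\vey>0$, this step (optionally extended to its maximal feasible length) is an $n$-approximate dd-step, and the whole procedure is polynomial. I expect the main obstacle to be the conformal decomposition claim itself — specifically pinning down the sharp bound $t\le n$ and its polynomial-time computability from the sign pattern of $B\vew^\star$; the feasibility bookkeeping and the averaging step should then be routine.
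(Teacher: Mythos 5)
Your proposal is correct and follows essentially the same route as the paper's proof of Lemma~\ref{lem:deepest-descent-approximation}: solve the LP for $\vex^\star$, take a sign-compatible (conformal) circuit decomposition of $\vex^\star-\vex^0$ with at most $n$ (the paper sharpens this to $n-\rank(A)$) terms, note that sign-compatibility makes each term a feasible step from $\vex^0$, and conclude by averaging against $-\vecc^T(\vex^\star-\vex^0)\ge c_{\vey}$. The only differences are presentational --- the paper cites the conformal-decomposition and polynomial computability facts from \cite{bv-18,g-75,o-10} whereas you re-derive them via Carath\'eodory in the sign-restricted cone, and you spell out the per-row feasibility check that the paper compresses into one sentence.
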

This follows by an averaging argument on well-behaved decompositions of the difference of two solutions to an LP as a set of (scaled) circuits.

The obvious follow-up question is whether an $n$-approximation can be significantly improved.
We answer this negatively, even for a fairly restricted family of LPs:
\begin{theorem}\label{thm:deepest-descent-unique}
	Even for LPs over $0/1$-polytopes defined by a totally unimodular matrix and with unique optima,
	\textsc{dd-SP} cannot be approximated 
	within $O(n^{1-\epsilon})$ for any $\epsilon > 0$ in polynomial time, unless $\PP 
	= \NP$.
\end{theorem}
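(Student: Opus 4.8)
The plan is to reduce from \textsc{Max Independent Set} (equivalently \textsc{Max Clique}), which on an $N$-vertex graph cannot be approximated within $N^{1-\delta}$ in polynomial time for any $\delta>0$ unless $\PP=\NP$. Given such a graph $H$, I would construct a linear program $\min\{\vecc^T\vex:\vex\in P\}$ and a start point $\vex^0\in P$ such that the dd-improvement of a $\vecc$-deepest-descent step from $\vex^0$ equals the independence number $\alpha(H)$, up to a bounded additive slack. The crucial budgeting observation is that the factor $O(n^{1-\epsilon})$ in \cref{thm:deepest-descent-unique} is measured against the number of \emph{variables} $n$ and not the number of constraints; one may therefore afford polynomially many (up to $\Theta(N^2)$) totally unimodular constraints to encode all non-adjacencies of $H$ while keeping $n=\Theta(N)$, so that an $N^{1-\delta}$-gap for $\alpha(H)$ turns into an $n^{1-\delta'}$-gap for \textsc{dd-SP}.

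For the correspondence between dd-steps and independent sets I would use that $P$ is a $0/1$-polytope: then every circuit $\veg\in\Circuits(A,B)$ has entries in $\{0,\pm1\}$ and every feasible step length is at most $1$, so the only candidates for a dd-step are full steps $\vey=\veg$ along circuits feasible at $\lambda=1$. The gadget should be designed so that precisely these circuits are in bijection with the independent sets $S\subseteq V(H)$, with $-\vecc^T\veg=|S|$ (so $\vecc$ has small, $\{0,-1\}$-type entries). Then the dd-improvement equals $\alpha(H)$, and any $k$-approximate dd-step — being such a full circuit step — exhibits an independent set of size at least $\alpha(H)/k$; hence a polynomial-time $o(n^{1-\epsilon})$-approximation of \textsc{dd-SP} would give a polynomial-time $o(N^{1-\epsilon})$-approximation of \textsc{Max Independent Set}, which is impossible unless $\PP=\NP$.

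To meet the promise of \emph{unique} optima I would perturb the cost lexicographically, replacing $\vecc$ by $\vecc'=2^{n+2}\vecc-(2^1,2^2,\dots,2^n)$. Any two distinct $0/1$-vertices of $P$ are separated by the tie-breaking vector, so the perturbed LP has a unique optimum; and since the perturbation only affects low-order bits, one still recovers $\alpha(H)=\lfloor(\text{dd-improvement})/2^{n+2}\rfloor$, and a $k$-approximate dd-step still produces a $\Theta(k)$-approximate independent set. The $0/1$- and totally-unimodular properties are built into the gadget and are not affected by this change.

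The main obstacle is the gadget: one must realize ``circuits feasible at $\lambda=1$'' $\leftrightarrow$ ``independent sets of $H$'' with a totally unimodular constraint matrix on only $\Theta(N)$ variables. Since the independent-set polytope is itself not totally unimodular, the encoding cannot be the naive one; I would instead try to route the $\Theta(N^2)$ adjacency constraints through a network matrix, or the transpose of one (the basic totally unimodular building blocks), on a linear number of coordinates, so that every circuit is forced, coordinate by coordinate, to select a conflict-free vertex set and to admit a full unit step, while no spurious circuit attains a larger improvement. Establishing total unimodularity of this construction and ruling out spurious large circuits is where the detailed case analysis will be concentrated; the perturbation and the final $n$-versus-$N$ accounting are then routine.
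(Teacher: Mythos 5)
Your reduction target and the paper's differ: you propose \textsc{Max Independent Set}, while the paper reduces from the Directed Longest Cycle Problem (DLCP) on graphs of bounded out-degree (via \cite{bhk-04}), perturbing arc costs by inverse powers of two to make all cycle costs distinct and then building a circulation LP with a node-arc incidence matrix. Your budgeting observation (measure $n$ against variables, not constraints) and the lexicographic tie-breaking for uniqueness are both sound and in the same spirit as the paper.

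However, the central gadget you defer to ``detailed case analysis'' is not merely unfinished --- it runs into a structural obstruction that makes the proposed correspondence unlikely to exist. By Definition~\ref{def:circuits}, circuits $\veg$ are those nonzero kernel elements for which $B\veg$ is \emph{support-minimal}; hence the $B$-supports of distinct circuits form an antichain. Independent sets of a graph, by contrast, are downward closed: every subset of an independent set, in particular every singleton, is again an independent set. If independent sets $S' \subsetneq S$ were both realized as circuits $\veg_{S'}, \veg_S$ (with supports tracking $S', S$ and a natural $B$ such as $\binom{I}{-I}$), then $\supp(B\veg_{S'}) \subsetneq \supp(B\veg_S)$, violating minimality of $\veg_S$ --- forcing only singletons to be circuits and collapsing the intended bijection. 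This is precisely why the paper's choice of \emph{simple directed cycles} is not incidental: no simple cycle's arc set strictly contains another's, so simple cycles form exactly the antichain that the circuit definition demands, and they arise naturally as the circuits of a node-arc incidence matrix (a TU building block on $n=|E|$ variables). You would need to redesign your encoding so that the objects you want to enumerate via circuits are themselves support-incomparable, or lift to auxiliary variables that restore the antichain property while keeping $n=\Theta(N)$; as stated, the reduction does not go through, and the missing piece is the content of the theorem rather than a routine case analysis.
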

In particular, this demonstrates that to obtain a better approximation ratio or even polynomial tractability, one would need to consider an even more restricted family of LPs.

Further, we turn to the complexity of computing dd-steps exactly. De Loera et al.~\cite{dks-19} have recently shown that \textsc{dd-SP} is \NPh.
However, a closer look at their construction reveals that they in fact show hardness of detecting whether it is possible to get to some optimum in one circuit step from a given initial point $\vex^0$. We call this problem OCNP:

\defProblemDecide{Optimal Circuit-Neighbor Problem (OCNP)}
{$\vecc\in \mathbb{R}^n$, polyhedron $P\subset\mathbb{R}^n$, $\vex^0\in P$ }
{Is there an optimum $\vex^*$ with respect to $\min\{\vecc^T\vex: \vex\in P\}$ such that $\vex^*-\vex^0$ is a circuit direction?}
Somewhat surprisingly, we show:
\begin{theorem}\label{thm:circuit-neighbor-efficiency}
	OCNP is solvable in polynomial time for LPs with a unique optimum.
\end{theorem}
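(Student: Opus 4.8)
The plan is to use the uniqueness promise to collapse the existential quantifier in OCNP to a single, explicitly computable candidate, and then to reduce the circuit-neighbor test to a rank computation. First I would solve $\min\{\vecc^T\vex : \vex\in P\}$ with any weakly-polynomial linear programming algorithm (e.g.~\cite{khachiyan1979polynomial}); by the promise the optimum is attained at a unique point $\vex^\ast$, which is then necessarily a vertex of $P$. Since $\vex^0,\vex^\ast\in P$ both satisfy $A\vex=\veb$, the difference $\veg:=\vex^\ast-\vex^0$ automatically lies in $\ker(A)$. If $\veg=\ve0$, then $\vex^0$ is itself the unique optimum and the answer is ``no'' (the zero vector is not a circuit). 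Otherwise, as $\vex^\ast$ is the only optimum, OCNP is a ``yes''-instance if and only if $\veg$ is a scalar multiple of some circuit in $\mathcal{C}(A,B)$; equivalently, by Definition~\ref{def:circuits}, if and only if $B\veg$ is support-minimal over $\{B\vex : \vex\in\ker(A)\setminus\{\ve0\}\}$ (the coprime-integer normalization merely fixes a representative of the line $\R\veg$ and is immaterial here).

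The core step is a linear-algebraic characterization of support-minimality that is checkable in polynomial time. I would establish the following: for $\veg\in\ker(A)\setminus\{\ve0\}$, the image $B\veg$ is support-minimal over $\{B\vex : \vex\in\ker(A)\setminus\{\ve0\}\}$ if and only if the subspace
\[
	V_\veg \;:=\; \bigl\{\,\veh\in\R^n \;:\; A\veh=\ve0,\ (B\veh)_i=0 \text{ for all } i\notin\supp(B\veg)\,\bigr\}
\]
is one-dimensional, hence equal to $\R\veg$. The ``if'' direction is immediate: any $\veh\in\ker(A)\setminus\{\ve0\}$ with $\supp(B\veh)\subsetneq\supp(B\veg)$ lies in $V_\veg=\R\veg$, so $\veh=\lambda\veg$ for some $\lambda\neq0$ and hence $\supp(B\veh)=\supp(B\veg)$, a contradiction. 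For the ``only if'' direction, suppose $\dim V_\veg\geq2$; since $P$ is pointed we have $\ker(A)\cap\ker(B)=\{\ve0\}$, so $B\veg\neq\ve0$ and we may pick an index $i\in\supp(B\veg)$. Imposing the single extra linear equation $(B\veh)_i=0$ on $V_\veg$ leaves a subspace of dimension at least $\dim V_\veg-1\geq1$, so it contains some $\veh\neq\ve0$; this $\veh$ lies in $\ker(A)\setminus\{\ve0\}$ and satisfies $\supp(B\veh)\subseteq\supp(B\veg)\setminus\{i\}\subsetneq\supp(B\veg)$, contradicting the support-minimality of $B\veg$.

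With this characterization in hand the whole procedure runs in polynomial time: compute $\vex^\ast$ and $\veg$, read off the index set $\supp(B\veg)$, and compute $\dim V_\veg$ by Gaussian elimination on the homogeneous system defining $V_\veg$; then output ``yes'' exactly when $\veg\neq\ve0$ and $\dim V_\veg=1$. I expect the only genuine subtlety to be the support-minimality lemma, and in particular the need to invoke pointedness of $P$ (equivalently $\ker(A)\cap\ker(B)=\{\ve0\}$) so that $\supp(B\veg)$ is nonempty and the dimension count in the ``only if'' direction goes through; everything else is standard LP solving and linear algebra. It is worth stressing that the uniqueness hypothesis is exactly what makes this approach work: without it, the optimal face may be positive-dimensional and one would have to decide whether \emph{any} of its possibly exponentially many points is a circuit neighbor of $\vex^0$, which is precisely the mechanism behind the hardness result of De Loera et al.~\cite{dks-19}.
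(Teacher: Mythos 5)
Your proof is correct, and it takes a genuinely different route from the paper's. The paper's argument invokes Proposition~\ref{prop:circuits_as_rays} (from~\cite{bv-18}): it lifts $\ved=\vex^\ast-\vex^0$ to the vector $\ved_S=(\ved,\vey^+,\vey^-)$ in the auxiliary cone $C_{A,B}\subseteq\R^{n+2m_B}$, observes that $\ved_S\notin T$ since $\ved\neq\ve0$, and then checks whether $\ved_S$ spans an extreme ray of $C_{A,B}$ by a rank test on the submatrix of active constraints. You instead work directly in $\R^n$, proving from Definition~\ref{def:circuits} a clean linear-algebraic characterization: a nonzero $\veg\in\ker(A)$ is a circuit direction if and only if the subspace $V_\veg=\{\veh:A\veh=\ve0,\ (B\veh)_i=0\ \forall\, i\notin\supp(B\veg)\}$ is one-dimensional. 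Your ``if'' and ``only if'' arguments are both sound; in particular your use of pointedness to guarantee $B\veg\neq\ve0$ (so that $\supp(B\veg)\neq\emptyset$ and the one-constraint dimension drop produces a strictly smaller nonempty support) is exactly the right invocation, and it also quietly rules out the degenerate case $|\supp(B\veg)|=1$, since then $V_\veg$ could never have dimension $\geq 2$ in the first place. The two tests are of course closely related---the paper's active-constraint rank condition on $C_{A,B}$ and your $\dim V_\veg=1$ condition both amount to asking whether a certain minimal face is a ray---but your version is self-contained, avoids introducing the lifted cone and the prior structural result as a black box, and makes transparent that this is nothing more than the standard elementary-vector test for the subspace $\{B\vex:\vex\in\ker(A)\}$. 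The trade-off is that the paper's route reuses machinery that it needs anyway for the $n$-approximation in Section~\ref{sec:napprox}, so within the paper it is the more economical choice; as a standalone proof yours is arguably cleaner.
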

The standard trick of slightly perturbing the objective $\vecc$ of an LP makes some optimum unique, and the set of objectives with non-unique optima has volume $0$, so in a sense OCNP is easy ``almost always.'' This is contrasted by De Loera et al.~\cite{dks-19} showing the $\mathsf{NP}$-hardness of general OCNP.

This raises the following question: What is the complexity of \textsc{dd-SP} for LPs with a unique optimum, given that OCNP is easy?
Despite the encouraging polynomial-time solvability of OCNP for this special case,
we obtain as a byproduct of  Theorem \ref{thm:deepest-descent-unique} that, unlike OCNP, \textsc{dd-SP} remains hard, even for the same, restricted family of LPs:
\begin{theorem}\label{thm:deepest-descent-hardness}
	\textsc{dd-SP} is 
	\NPh, even for LPs over $0/1$-polytopes defined by a totally unimodular matrix and with unique optima.
\end{theorem}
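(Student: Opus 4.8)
The plan is to deduce Theorem~\ref{thm:deepest-descent-hardness} directly from Theorem~\ref{thm:deepest-descent-unique}, so that the argument reduces to the observation that exact computation is a special case of approximation. Recall that calling the search problem \textsc{dd-SP} \NPh{} means that a polynomial-time algorithm solving it would imply $\PP = \NP$. So suppose there were such an algorithm $\mathcal{A}$, restricted to LPs over $0/1$-polytopes whose constraint matrix is totally unimodular and whose optimum is unique. On every such instance, $\mathcal{A}$ returns a genuine $\vecc$-deepest-descent step $\vey$, whose improvement is exactly the dd-improvement $c_{\vey} = -\vecc^T\vey$. In particular $\mathcal{A}$ is a polynomial-time $1$-approximation algorithm, and hence trivially a polynomial-time $O(n^{1-\epsilon})$-approximation algorithm (since $n^{1-\epsilon}\ge 1$ for all $n\ge 1$ and $\epsilon\in(0,1)$), for \textsc{dd-SP} on precisely the family of LPs addressed by Theorem~\ref{thm:deepest-descent-unique}. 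Fixing, say, $\epsilon = \tfrac12$, Theorem~\ref{thm:deepest-descent-unique} then forces $\PP = \NP$, which is the claim.

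The only thing to verify is a bookkeeping point: that the restricted instance class in the present statement coincides with the one for which Theorem~\ref{thm:deepest-descent-unique} already establishes inapproximability. Since that theorem is stated for exactly the same class --- LPs over $0/1$-polytopes defined by a totally unimodular matrix with a unique optimum --- the two match verbatim and no further construction is needed. Note also that this is the class for which OCNP is polynomial-time solvable by Theorem~\ref{thm:circuit-neighbor-efficiency}, so Theorem~\ref{thm:deepest-descent-hardness} genuinely records a complexity separation between OCNP and \textsc{dd-SP} rather than an artifact of differing instance families.

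I do not expect a real obstacle: all the substantive work --- the gap-producing reduction from an \NPc{} problem and the check that its output lies in the claimed restricted family --- already happens inside the proof of Theorem~\ref{thm:deepest-descent-unique}, and the present statement merely extracts the exact-computation consequence of the gap. If one wished to avoid phrasing things through approximation, one could equivalently feed the instances produced by that reduction to $\mathcal{A}$, compute $c_{\vey} = -\vecc^T\vey$ on the returned step, and compare it against the threshold separating the ``yes'' and ``no'' instances; this decides the \NPc{} problem but is strictly more cumbersome than the corollary-style argument and yields nothing extra.
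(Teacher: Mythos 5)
Your proposal matches the paper's own argument: the paper also derives Theorem~\ref{thm:deepest-descent-hardness} as an immediate corollary of the inapproximability result in Theorem~\ref{thm:deepest-descent-unique}, since an exact polynomial-time algorithm on that instance class would be a $1$-approximation and hence an $O(n^{1-\epsilon})$-approximation. No gap; the reasoning and the matching of the restricted instance family are correct.
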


%



\subsection{Connections to Previous Work}\label{sec:related}
There are two papers in the literature with an especially strong connection to ours. 
We detail this connection separately, and discuss other related work hereafter. 

Firstly, and most importantly, an inspiration for this note is the recent paper of De Loera et al.~\cite{dks-19}. Our polynomial-time algorithm for OCNP (Theorem~\ref{thm:circuit-neighbor-efficiency}) stands in contrast to the results of De Loera et al.~\cite{dks-19},
where it is shown that finding optimal circuit-neighbors is \NPh in general.
Hence, the hardness of OCNP hinges on the existence of multiple optima.
At this point, a flawed line of reasoning might become appealing:

The reduction in \cite{dks-19} comes from the directed Hamiltonian path problem.
By introducing a negligible probability for one-sided error through the Isolation Lemma \cite{isolation},
we may assume that the reduction source instance $D = (N,F,s,t)$ on $n=|N|$ nodes, 
has a unique solution---that is, a unique Hamiltonian path from $s$ to $t$.
It is tempting to apply the reduction of \cite{dks-19},  
and use the above algorithm for OCNP to solve the produced instance.
This is an optimization problem on the matching polytope $P_M(H)$ of some undirected bipartite graph $H$ on $2n+1$ vertices.
We then have also solved the original instance of the (unique) Hamiltonian path problem in polynomial time.
This fails, however, since the optima of the instance of OCNP are not in one-to-one correspondence with
Hamiltonian paths in the input instance.
Namely, the set of optima in the instance of OCNP is the set of matchings of size $n-1$ in a graph $H'$ obtained from $H$ through the deletion of some edges.
In particular, this set is not necessarily a singleton if the original graph $D$ had a unique Hamiltonian path.

To save this approach, one might apply a perturbation to the cost vector of the produced LP on $P_M(H)$,
to ensure uniqueness of solutions nonetheless (as remarked, uniqueness holds with probability 1). 
This perturbation, however, would have to retain precisely all optimal circuit neighbors, and not one of the other optima. Producing this perturbation would therefore require us to have at hand an optimal circuit neighbor in the first place.

To avoid confusion, we stress that ``uniqueness'' refers not to the solutions of OCNP itself,
but to the LP that constitutes part of the input of OCNP (which implies uniqueness of the solution for OCNP). 
In other words, there might be a unique optimal circuit neighbor, while the LP has several optima.

Also note that while~\cite{dks-19} discusses approximability, it does \emph{not} concern \textsc{dd-SP} but a different problem: deciding what is the shortest path between two vertices of a polytope, either using the edges of the $1$-skeleton, or using circuit steps.
It is not clear to us whether any inapproximability of \textsc{dd-SP} follows from their construction.

Secondly, we make use of \cite{bv-18} for our positive results on OCNP and the $n$-approximability of \textsc{dd-SP}. Most importantly, the set of circuit directions appear as a subset of the extreme rays of a polyhedral cone constructed from the original input~\cite[Theorem 3]{bv-18}. Recall that extreme rays are those not in the conic hull of any other rays in the object at hand. 

\begin{proposition}\label{prop:circuits_as_rays}
Let $P = \{ \vex \in \R^n \mid A \vex = \veb, B \vex \leq \ved \}$ be a pointed polyhedron. The pointed cone
\begin{align*}
C_{A,B} = \{(\vex, \vey^+, \vey^-) \in \R^{n + 2m_B} \mid A\vex = \ve0, \ B\vex = \vey^+ - \vey^-, \ \vey^+, \vey^- \geq \ve0\}
\end{align*}
is generated by the set of extreme rays $S \cup T'$, where:
\begin{enumerate}
\item The set $S := \{ (\veg, \vey^+, \vey^-) \mid \veg \in \mathcal{C}(A, B), \ y_i^+ = \max\{(B\veg)_i,0\}, \  y_i^- = \max\{-(B\veg)_i,0\}\}$ gives the circuits of $P$.
\item The set $T' \subseteq T := \{ (\ve0, \vey^+, \vey^-) \mid y^+_i = y^-_i = 1 \text{ for some $i \leq m_B$}, \  y^+_j = y^-_j = 0 \text{ for $j \neq i$}\}$ has size at most $m_B$. 
\end{enumerate}
\end{proposition}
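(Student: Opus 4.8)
The plan is to determine the extreme rays of $C_{A,B}$ explicitly and then invoke the Minkowski--Weyl theorem for pointed cones. First I would verify that $C_{A,B}$ is pointed: its lineality space consists of the $(\vex,\vey^+,\vey^-)$ with $\vey^+,\vey^-\ge\ve0$ and $-\vey^+,-\vey^-\ge\ve0$, forcing $\vey^+=\vey^-=\ve0$ and hence $A\vex=B\vex=\ve0$; since $P$ is pointed, $\ker(A)\cap\ker(B)=\{\ve0\}$, so the lineality space is trivial. Consequently $C_{A,B}$ is the conic hull of its finitely many extreme rays, and it suffices to show these rays are exactly the elements of $S$ together with some $T'\subseteq T$ (the bound $|T'|\le|T|=m_B$ then being automatic).

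The structural observation driving the argument is that every $(\vex,\vey^+,\vey^-)\in C_{A,B}$ is the \emph{canonical lift} $\widehat{\vex}:=(\vex,\max\{B\vex,\ve0\},\max\{-B\vex,\ve0\})$ of some $\vex\in\ker(A)$ plus a nonnegative combination $\sum_i t_i(\ve0,\ve{e}_i,\ve{e}_i)$ of the generators of $T$. A short exchange argument then shows that on any extreme ray with $\vex\ne\ve0$ one has $y_i^+y_i^-=0$ for all $i$ — otherwise one can subtract a small multiple of the $i$-th $T$-generator and produce a nontrivial decomposition — so such rays are precisely the canonical lifts of vectors in $\ker(A)\setminus\{\ve0\}$; and every extreme ray with $\vex=\ve0$ has $\vey^+=\vey^-$ equal to a single coordinate vector (else split off one coordinate), hence lies in $T$. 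This already yields $T'\subseteq T$ and reduces the task to deciding which canonical lifts are extreme.

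For the latter I would introduce, for $\vex\in\ker(A)\setminus\{\ve0\}$, the sign-compatible cone $V(\vex):=\{\vex'\in\ker(A):\operatorname{sign}((B\vex')_i)\in\{0,\operatorname{sign}((B\vex)_i)\}\text{ for all }i\}$, and prove $\widehat{\vex}$ spans an extreme ray if and only if $\dim V(\vex)=1$. One direction is immediate: in any decomposition $\widehat{\vex}=\ve r_1+\ve r_2$ inside $C_{A,B}$, the nonnegativity constraints force each $\ve r_j$ to be the canonical lift of some $\vex_j\in V(\vex)$, so one-dimensionality of $V(\vex)$ makes $\vex_1,\vex_2$, and hence the lifts, parallel to $\widehat{\vex}$. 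Conversely, if $V(\vex)$ contains a vector $\vex''$ not parallel to $\vex$, then, using that $\vex$ lies in the relative interior of $V(\vex)$ and that the canonical lift is additive on $V(\vex)$, the split $\widehat{\vex}=\widehat{\vex-\varepsilon\vex''}+\varepsilon\widehat{\vex''}$ for small $\varepsilon>0$ exhibits $\widehat{\vex}$ as a sum of two non-parallel rays of $C_{A,B}$. Finally one checks that $\dim V(\vex)=1$ is equivalent to $B\vex$ being support-minimal over $\{B\vex'\mid\vex'\in\ker(A)\setminus\{\ve0\}\}$ — the ``only if'' via a coordinate-elimination ($t$-scaling) step, the ``if'' by adding a large positive multiple of $\vex$ to any support-smaller competitor to make it sign-compatible — which by Definition~\ref{def:circuits} means exactly that $\vex$ is a positive multiple of a circuit in $\mathcal{C}(A,B)$; this identifies the extreme rays with $\vex\ne\ve0$ with $S$, and assembling $S$ with $T'$ finishes the proof.

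I expect the main obstacle to be the equivalence ``$\dim V(\vex)=1\iff B\vex$ support-minimal,'' in particular the step that converts a competitor with strictly smaller support into a genuinely sign-compatible one, together with the bookkeeping needed to see that one-dimensionality of the rational cone $V(\vex)$ yields an honest coprime-integer circuit rather than merely a real direction. The remaining ingredients — pointedness, the complementarity exchange, and additivity of the canonical lift on $V(\vex)$ — are routine polyhedral manipulations.
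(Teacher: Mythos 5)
The paper does not prove Proposition~\ref{prop:circuits_as_rays}: it is imported as a black box from \cite[Theorem~3]{bv-18}, so there is no in-paper argument to compare against. Your self-contained proof is, as far as I can tell, correct in outline. The chain ``extreme ray with $\vex\neq\ve 0$ $\iff$ canonical lift with $\dim V(\vex)=1$ $\iff$ $B\vex$ support-minimal $\iff$ $\vex$ is (a positive multiple of) a circuit'' works, and the two delicate steps you flag both go through: in the forward direction, sliding $\vex-t\vex''$ until a coordinate of $B(\vex-t\vex'')$ vanishes either produces a vector of strictly smaller support or forces $\vex-t^*\vex''=\ve 0$ (impossible since $\vex''$ is not parallel to $\vex$); in the backward direction, $\vex'+M\vex$ for large $M$ re-signs a support-smaller competitor into $V(\vex)$. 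Two points worth making explicit if you write this out in full: (i) pointedness of $P$, i.e.\ $\ker A\cap\ker B=\{\ve0\}$, is used three times, not once --- for pointedness of $C_{A,B}$, to rule out $B(\vex-t^*\vex'')=\ve 0$ with $\vex-t^*\vex''\neq\ve0$, and to exclude negative multiples of $\vex$ from $V(\vex)$ in the one-dimensional case; (ii) you only show that the extreme rays are contained in $S\cup T$ and that all of $S$ is extreme, which is exactly what the proposition asserts (it leaves $T'\subseteq T$ unspecified, so no further characterization of $T'$ is needed). The passage from a one-dimensional rational ray to a coprime-integer representative is the standard normalization in Definition~\ref{def:circuits} and needs no separate argument once rationality of the data is assumed, as it is implicitly throughout the paper.
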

Informally, all circuits of $P$ can be found as extreme rays of $C_{A,B}$: a projection of a vector in the set $S$ onto its first $n$ components gives the corresponding circuit $\veg$. The `non-circuits' in the set $T$ are trivial to identify, and the corresponding projection just returns $\ve0$. Note further that the length of a bit encoding of $C_{A,B}$ is (in the order of) at most twice the bit encoding length of $P$. This implies that one can efficiently optimize linear objective functions over the set of (one-normed) circuits. Further, this allows the efficient computation of a conformal sum.

\subsection{Related Work}
Apart from the directly related papers mentioned in the previous subsection, there is vast literature revolving around pivoting rules for circuit augmentation algorithms, and circuits of linear programs in general.
Without any pretense of being comprehensive, let us point to a couple of seminal works (below) and refer to~\cite{bv-18} with respect to circuits, and to~\cite{dks-19} for circuit augmentation and the references therein for a more extensive treatment.

The idea of performing augmenting steps in the direction of circuits instead of only edges during an execution of the simplex algorithm goes back at least to Bland's thesis \cite{bland-thesis} and is explored in detail in \cite{dhl-15} and implemented, for example, in \cite{bv-20}. The notion of a circuit itself in turn was conceived only slightly before that by Rockafellar~\cite{r-69}, and quite fruitfully~\cite{h-99,h-02,h-03} adapted to the integral case by Graver~\cite{g-75}.
 

\subsection{Outline}\label{sec:outline}
Our main contribution is a proof of the inapproximability of the computation of a dd-step within a factor of $O(n^{1-\epsilon})$, even when restricted to special classes of polyhedra. We begin by connecting to and generalizing previous results in the literature, in Sections \ref{sec:optneighbor} and \ref{sec:napprox}. In Section \ref{sec:inapprox}, we prove our main result. In Section \ref{sec:open}, we conclude with some open questions.


\section{Efficiency of OCNP for LPs with unique optima}\label{sec:optneighbor}
We begin by discussing the OCNP problem.
De Loera et al.~\cite{dks-19} showed that OCNP is \NPh, and this implies that computing an optimal dd-step is \NPh.
(We call an optimization problem \NPh if a corresponding decision version---is it possible to meet or exceed a given objective function value?---is \NPh.) Recall the discussion in Section \ref{sec:related}.

The proof in~\cite{dks-19} is based on the underlying LP having multiple optima. While showing the claim under this assumption clearly is sufficient, note that for a given polyhedron $P$, the set $C_{\text{multi}}\subset \mathbb{R}^n$ of $\vecc$ for which there exist multiple optima has volume $0$ in $\mathbb{R}^n$. Informally, it is enough to slightly perturb the objective function to create a unique optimum.
We now show that this hardness does not hold if the underlying LP has a unique optimum. 
\begin{lemma}\label{lem:circuit-neighbor-efficiency} 
Let $\min\{\vecc^T\vex: \vex\in P\}$ be an LP over a polyhedron $P$ with a unique optimum $\vex^*$ (that may not be known), and let $\vex^0\in P$. In polynomial time, it can be verified whether $\vex^*-\vex^0$ is a circuit direction.
\end{lemma}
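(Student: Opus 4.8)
The plan is to reduce the question to polynomially many linear programs. The key observation is that although we are not given the optimum $\vex^*$, uniqueness means the optimal value $\gamma^* := \min\{\vecc^T\vex : \vex \in P\}$ can be computed in polynomial time by solving the LP, and then $\vex^*$ is the unique point of the face $F := \{\vex \in P : \vecc^T\vex = \gamma^*\}$. So the task becomes: decide whether there exists a circuit direction $\veg \in \mathcal{C}(A,B)$ and a scalar $\alpha > 0$ such that $\vex^0 + \alpha\veg$ lies in $F$. Since $F$ is a single point $\vex^*$, this is equivalent to asking whether the ray $\{\vex^0 + \alpha\veg : \alpha > 0\}$ hits $F$ for some circuit $\veg$; and because $\vex^* - \vex^0$ is then forced to be a positive multiple of $\veg$, the question is simply whether the vector $\vex^* - \vex^0$ (after normalization) is a circuit — but we must decide this without explicitly producing $\vex^*$.

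The device to get around not knowing $\vex^*$ is Proposition \ref{prop:circuits_as_rays}: circuits are (projections of) extreme rays of the pointed cone $C_{A,B}$, whose encoding length is polynomial in that of $P$. First I would set up, for this cone, the characterization that a direction $\veg \in \ker(A)$ is a circuit iff the corresponding lifted vector $(\veg, \vey^+, \vey^-)$ with $\vey^\pm$ defined as in set $S$ is an extreme ray of $C_{A,B}$, equivalently iff its "support pattern" among the $\vey^+ - \vey^-$ coordinates is inclusion-minimal — this is exactly Definition \ref{def:circuits}. Now, the candidate direction is $\ved := \vex^* - \vex^0$, which satisfies $A\ved = \ve0$ (both points are feasible and $A$ has a fixed right-hand side), so $\ved \in \ker(A)$, and it lifts into $C_{A,B}$. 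To check that it is a circuit, I would check support-minimality of $B\ved$ by solving, for each index $i$ in $\supp(B\ved)$, an LP over $\ker(A)$ minimizing nothing but enforcing $(B\veh)_j = 0$ for $j \notin \supp(B\ved)$ and $(B\veh)_i = 0$, with a normalization constraint to exclude $\veh = \ve0$; $\ved$ is a circuit iff all these systems are infeasible. The remaining subtlety is that we still need $\ved$ itself: but here uniqueness saves us a second time — we can recover $\ved$ coordinate-wise by, for each coordinate $k$, solving $\max\{x_k : \vex \in F\}$ and $\min\{x_k : \vex \in F\}$; uniqueness of the optimum forces both to equal $x^*_k$, so $\vex^*$ and hence $\ved = \vex^* - \vex^0$ is computed exactly in polynomial time.

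Putting it together, the algorithm is: (i) solve the LP to get $\gamma^*$; (ii) recover $\vex^*$ coordinate-by-coordinate over the optimal face $F$ using $2n$ further LPs (valid because the optimum is unique); (iii) set $\ved = \vex^* - \vex^0$, and if $\ved = \ve0$ answer trivially; (iv) normalize $\ved$ to coprime integer entries and verify it is a circuit via the support-minimality LPs over $\ker(A)$ described above (equivalently, verify its lift is an extreme ray of $C_{A,B}$ using Proposition \ref{prop:circuits_as_rays}); (v) answer yes iff step (iv) succeeds. The main obstacle — and the place where the "unique optimum" hypothesis is genuinely essential — is step (ii): without uniqueness, the optimal face could be higher-dimensional and there would be no single vector $\ved$ to test, which is precisely why the general problem is \NPh\ in \cite{dks-19}. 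Everything else is routine linear programming and a support-minimality check, each of which is polynomial by the usual ellipsoid/interior-point machinery and by the polynomial bound on the encoding length of $C_{A,B}$ noted after Proposition \ref{prop:circuits_as_rays}.
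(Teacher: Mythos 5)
Your proposal is correct and follows essentially the same route as the paper: solve the LP to obtain the unique optimum $\vex^*$ (the coordinate-wise recovery over $F$ via $2n$ additional LPs is a valid but unnecessary detour, since any LP solver returning a vertex already yields $\vex^*$ by uniqueness), form $\ved = \vex^*-\vex^0$, and test whether its lift $(\ved,\vey^+,\vey^-)$ is an extreme ray of $C_{A,B}$ from Proposition~\ref{prop:circuits_as_rays}. The paper implements the circuit test as a single rank check on the active constraints of $C_{A,B}$, whereas you also sketch an equivalent support-minimality check via auxiliary LPs; both are polynomial and the underlying idea is the same.
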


\begin{proof}
Generally, LPs are solvable in polynomial time. As the LP at hand has a unique optimum $\vex^*$, this $\vex^*$ can be found in polynomial time. Let $\ved = \vex^*-\vex^0$. If $\ved = \ve0$, there is nothing to prove: $\vex^0$ itself already is optimal  and we were able to verify so efficiently. Thus $\ved \neq \ve0$ in the following.

Recall that the circuit directions of a polyhedron $P$ appear as a subset $S$ of the extreme rays of a polyhedral cone $C_{A,B}$, as in Proposition \ref{prop:circuits_as_rays} \cite{bv-18}.  We construct $\ved_S:= (\ved, \vey^+, \vey^-)$, where $\vey_i^{\pm} = \max\{\pm(B\ved)_i,0\}$ as in the definition of $S$. The construction of $\ved_S$ is efficient: $\ved$ is copied over and $\vey^{\pm}$ is derived from a matrix-vector product on the original input and component-wise comparisons. 

 Note that $\ved_S\in C_{A,B}$, as $\vex^{\ast},\vex^0 \in P$, and that $\ved_S \notin T$ (as $\ved \neq \ve0$). Thus, if $\ved_S$ is an extreme ray of $C_{A,B}$, it can only be a member of $S$, which would imply that $\ved$ is a circuit. A check whether a given $\ved_S\in C_{A,B}$ is an extreme ray is possible in polynomial time: first, identify the set of active constraints of $\ved_S$ with respect to $C_{A,B}$, i.e., check which constraints in the formulation of $C_{A,B}$ given in Proposition \ref{prop:circuits_as_rays} are satisfied with equality, and construct the associated row submatrix of all active constraints; then perform a rank check for this submatrix -- if its rank is precisely $(n+2m_B) -1$, then $\ved_S$ lies in a one-dimensional face of $C_{A,B}$, i.e., in an extreme ray. These steps are possible in polynomial time because the bit encoding length of $C_{A,B}$ is at most twice the bit encoding length of $P$.


Summing up, $\vex^{\ast}$ can be found efficiently, $\ved_S$ can be constructed efficiently, and $\ved_S$ is an extreme ray of  $C_{A,B}$ if and only if $\vex^{\ast}-\vex^0$ is a circuit direction, and the required check is efficient, too. This proves the claim.
\end{proof}
As an immediate consequence, we obtain the following theorem.
\begin{reptheorem}{thm:circuit-neighbor-efficiency}
OCNP is solvable in polynomial time for LPs with a unique optimum.
\end{reptheorem}
Because the set of objective functions for which there exist multiple optima for a given polyhedron has volume $0$, Theorem \ref{thm:circuit-neighbor-efficiency} tells us that OCNP ``almost always'' can be decided efficiently. 

\section{$n$-Approximability of \textsc{dd-SP}}\label{sec:napprox}
Next, we show that an efficient approximation of \textsc{dd-SP} with an error equivalent to the dimension of the underlying polyhedron is possible.

\begin{lemma}\label{lem:deepest-descent-approximation}
Let $P = \{ \vex \in \R^n \mid A \vex = \veb, B \vex \leq \ved \}$, let $\vecc\in \mathbb{R}^n$ and $\vex^0\in P$, and consider the LP $\min\{\vecc^T\vex: \vex\in P\}$.
Then an $\left(n{-}\rank(A)\right)$-approximation of a $\vecc$-deepest-descent circuit step in $P$ from $\vex^0$ can be computed in polynomial time.
\end{lemma}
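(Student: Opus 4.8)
The plan is to decompose the gap $\vex^* - \vex^0$ between the LP optimum $\vex^*$ and the starting point $\vex^0$ into a conformal sum of scaled circuits, and then argue by averaging that one of these circuit summands, after being extended to its maximal feasible step length, already achieves at least a $1/(n-\rank(A))$ fraction of the deepest-descent improvement.

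First I would solve the LP $\min\{\vecc^T\vex : \vex \in P\}$ in polynomial time to obtain an optimal vertex $\vex^*$ (if the LP is unbounded or $\vex^0$ is already optimal, the statement is trivial or vacuous, so assume a finite optimum with $\vecc^T\vex^* < \vecc^T\vex^0$). Set $\ved = \vex^* - \vex^0 \in \ker(A)$. The key structural fact I would invoke is that $\ved$ admits a \emph{conformal decomposition} $\ved = \sum_{j=1}^{k} \lambda_j \veg_j$ with $\veg_j \in \mathcal{C}(A,B)$, $\lambda_j > 0$, where the sum is sign-compatible both in the $\ker(A)$-coordinates and in the $B$-coordinates (i.e.\ no cancellation), and the number of terms satisfies $k \leq n - \rank(A)$ — this is the standard Carathéodory-type bound for conformal circuit decompositions, and it can be computed efficiently using Proposition~\ref{prop:circuits_as_rays}: each step peels off an extreme ray of $C_{A,B}$ lying in $S$ (the cone is polynomial-sized, so linear optimization and ray extraction over it are polynomial). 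Conformality is exactly what makes $\vex^0 + \lambda_j \veg_j$ feasible for each $j$: since $B\ved = \sum_j \lambda_j B\veg_j$ is a sign-compatible sum, each individual $B(\lambda_j \veg_j)$ is componentwise ``between'' $\ve0$ and $B\ved$, so the inequalities $B(\vex^0 + \lambda_j\veg_j) \le \ved$ are inherited from $B\vex^* \le \ved$ and $B\vex^0 \le \ved$.

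Next comes the averaging argument. By linearity, $-\vecc^T\ved = \sum_{j=1}^k -\vecc^T(\lambda_j\veg_j)$, and since $-\vecc^T\ved = \vecc^T\vex^0 - \vecc^T\vex^* \ge 0$ while all summands can be taken nonnegative (drop any circuit $\veg_j$ with $\vecc^T\veg_j \ge 0$; conformality keeps the remaining partial step feasible and only increases the improvement, or at worst leaves it unchanged), at least one index $j^*$ satisfies $-\vecc^T(\lambda_{j^*}\veg_{j^*}) \ge \frac{1}{k}(-\vecc^T\ved) \ge \frac{1}{k}(-\vecc^T\ved)$. Now $\vex^0 + \lambda_{j^*}\veg_{j^*} \in P$, so the maximal feasible step length $\alpha^*$ along $\veg_{j^*}$ from $\vex^0$ is at least $\lambda_{j^*}$, and hence the circuit step $\vez = \alpha^*\veg_{j^*}$ has improvement $-\vecc^T\vez \ge -\vecc^T(\lambda_{j^*}\veg_{j^*}) \ge \frac{1}{k}(-\vecc^T\ved)$. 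Finally I would observe $-\vecc^T\ved \ge c_{\vey}$, the deepest-descent improvement: indeed a dd-step $\vey = \alpha\veg$ ends at a feasible point $\vex^0 + \vey$, whose objective value is at least $\vecc^T\vex^*$ by optimality of $\vex^*$, so $c_{\vey} = -\vecc^T\vey = \vecc^T\vex^0 - \vecc^T(\vex^0+\vey) \le \vecc^T\vex^0 - \vecc^T\vex^* = -\vecc^T\ved$. Combining, $-\vecc^T\vez \ge \frac{1}{k} c_{\vey} \ge \frac{1}{n-\rank(A)} c_{\vey}$, so $\vez$ is an $(n-\rank(A))$-approximate dd-step, and everything above was computed in polynomial time.

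\textbf{Main obstacle.} The crux is establishing, with a genuinely \emph{polynomial-time} procedure, the conformal decomposition with the Carathéodory bound $k \le n - \rank(A)$ on the number of circuits — and verifying that conformality in the $B$-coordinates (not merely in $\ker(A)$) is what guarantees feasibility of each partial step $\vex^0 + \lambda_j\veg_j$. This is where Proposition~\ref{prop:circuits_as_rays} does the heavy lifting: writing $\ved_S = (\ved, \vey^+, \vey^-) \in C_{A,B}$ and repeatedly extracting an extreme ray in $S$ via linear programming over the (polynomially-encoded) cone $C_{A,B}$, subtracting the largest feasible multiple, and noting each subtraction zeroes out at least one active constraint so the process terminates within $n - \rank(A)$ rounds of the $S$-type (rays of type $T$ contribute $\ve0$ to the $\vex$-part and can be ignored). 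I would need to be a little careful that the extreme-ray extraction respects conformality and that the count is bounded correctly, but this is by now a well-understood computation and the bulk of the argument is the short averaging step above.
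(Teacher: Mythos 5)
Your proof is correct and follows essentially the same approach as the paper's: solve the LP, compute a conformal circuit decomposition of $\vex^*-\vex^0$ with at most $n-\rank(A)$ terms via the cone of Proposition~\ref{prop:circuits_as_rays}, average to select a summand capturing a $1/(n-\rank(A))$ fraction of the total improvement, and extend it to the maximal feasible circuit step (noting this improvement upper-bounds $c_{\vey}$ via optimality of $\vex^*$ and feasibility of $\vex^0+\vey$). The only cosmetic difference is your optional discarding of summands with $\vecc^T\veg_j\geq 0$, which the paper omits because straight averaging over all terms already yields the bound.
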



\begin{proof}
Let $\vey$ be a $\vecc$-deepest-descent step $\vey$ in $P$ from $\vex^0$ and let $\vex^{\ast}$ be an optimum of $\min\{\vecc^T\vex: \vex\in P\}$. LPs generally are polynomial-time solvable, so an optimal $\vex^{\ast}$ can be computed efficiently. 

The vector $\vex^{\ast}-\vex^0$ can be written as a so-called {\em conformal sum} $\vex^{\ast}-\vex^0=\sum_{i=1}^{n'} \alpha_i \veg_i$, where $n'\leq n$, $\alpha_i>0$ and $\veg_i$ is a circuit of $P$ for all $i\leq n'$, and all the circuits $\veg_i$ are sign-compatible with each other (and with $\vex^{\ast}-\vex^0$) \cite{g-75,o-10}. In fact, $n'\leq n{-}\rank(A)$, as a lifting of $\vex^{\ast}-\vex^0$ lies in a cone of dimension at most $n{-}\rank(A)$ whose extreme rays correspond to sign-compatible circuits to it~\cite[Theorem 6 and paragraphs after]{bv-18}. Such a conformal sum can be computed in polynomial time, see, e.g., Algorithm 4 in \cite{bv-18}. 

Next, note that $\vecc^T\vey \geq \vecc^T(\vex^{\ast}-\vex^0)=\sum_{i=1}^{n'} \vecc^T(\alpha_i \veg_i)$. (Recall that $\vecc^T\vey$ is negative, as LP is a minimization problem.) Thus, at least one of the $\alpha_i \veg_i$ satisfies $\vecc^T(\alpha_i \veg_i) \leq  \frac{1}{{n'}}\vecc^T(\vex^{\ast}-\vex^0) \leq \frac{1}{{n'}}\vecc^T\vey$. For a given conformal sum, it is efficient to find an $\alpha_i \veg_i$ with smallest value $\vecc^T(\alpha_i \veg_i)$.

 By sign-compatibility of the $\veg_i$, for any index set $I\subset \{1,\dots,n'\}$, $\vex^0+\sum_{i\in I} \alpha_i \veg_i$ lies in $P$. In particular, this holds for $|I|=1$: each of the $\veg_i$ allows for a (maximal-length) circuit step $\beta_i \veg_i$ from $\vex^0$ that stays in $P$, and where $\beta_i \geq \alpha_i$. Note $\vecc^T(\beta_i \veg_i)\leq \vecc^T(\alpha_i \veg_i)$

For a given $\veg_i$, it is efficient to compute the maximal $\beta_i$ such that $\vex^0+\beta_i\veg_i \in P$: each facet of the polyhedron provides an upper bound on $\beta_i$ and one picks the smallest from them. Thus a $\beta_i\veg_i$ with $\vecc^T(\beta_i \veg_i)\leq \frac{1}{{n'}}\vecc^T\vey$ can be computed in polynomial time. This proves the claim.
\end{proof}

As an immediate consequence, we obtain the following corollary.

\begin{reptheorem}{thm:deepest-descent-approximation}
\textsc{dd-SP} can be efficiently approximated within a factor of $n$.
\end{reptheorem}

\section{$O(n^{1-\epsilon})$-Inapproximability of \textsc{dd-SP}}\label{sec:inapprox}
The efficiency of OCNP for LPs with a unique optimum (Section \ref{sec:optneighbor}) is one of the reasons for our interest in a proof for the inapproximability (and implied NP-hardness) of \textsc{dd-SP} that does not rely on this restriction. In Section \ref{sec:napprox}, we saw that there is an efficient $n$-approximation. In this section, we show that this is essentially the best one can expect.

We will provide a proof for the claimed inapproximability of \textsc{dd-SP} that holds even when restricted to special classes of polyhedra. We call a polyhedron  $P = \{ \vex \in \R^n \mid A \vex = \veb, B \vex \leq \ved \}$ with totally unimodular constraint matrices $A$ and $B$ and integral right-hand sides a TU-polyhedron.


To this end, we will perform a reduction from the following problem.

\defProblemFind{Directed Weighted Longest Cycle Problem (DWLCP)}
{Directed graph $G=(V,E,c)$ with arc costs $\vecc\in \mathbb{Q}^{|E|}$ }
{Directed cycle of maximum cost}


DWLCP is a generalization of the Directed (Unweighted) Longest Cycle Problem (DLCP), where the number of arcs of a cycle is counted, i.e., $c_{ij}=1$ for all $(i,j)\in E$. Note that $|V|$ is the largest possible cost of a simple cycle for any instance of DLCP. For a graph $G=(V,E,c)$, DLCP cannot be polynomial-time approximated within $|V|^{1-\epsilon}$ for any $\epsilon > 0$, unless $P = NP$ \cite{bhk-04}. This hardness transfers immediately to DWLCP: the cost $c_{\vex}=\vecc^T\vex$ of a longest cycle $\vex$ cannot be polynomial-time approximated within $|c_\vex|^{1-\epsilon}$  for any $\epsilon > 0$.

Through a reduction from DWLCP, we will prove that a dd-step $\vey$ also cannot be polynomial-time approximated within 
$|c_{\vey}|^{1-\epsilon}$ for any $\epsilon > 0$. In our construction, we will guarantee that the underlying LP has a unique solution (and, even stronger, that this fact is known), which allows us to obtain inapproximability and hardness even for such LPs. To this end, we begin with the polynomial construction of an instance of DWLCP from DWLP where all cycles have different costs while retaining the original ``hierarchy'' of costs. We denote the length of a bit encoding of a weighted graph $G$ as $\mathcal{I}_G$. 

\begin{lemma}\label{lem:unique-cost-cycles}
Let $G=(V,E)$ be a directed graph. It is possible to construct a set of arc 
costs $\vecc\in \mathbb{Q}^{|E|}$ in polynomial time such that: all cycles in 
$G'=(V,E,\vecc)$ have different cost, the cost of a cycle is at least its number of arcs, and the cost of a cycle exceeds the number of 
arcs by strictly less than one. Further, the bit encoding length of $G'$ is 
polynomial in the bit encoding length of $G$.
\end{lemma}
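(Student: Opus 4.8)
The plan is to perturb the all-ones cost vector by small, carefully chosen rational amounts so that distinct cycles get distinct totals, while keeping every perturbation in the half-open interval $[0,1)$ so that each cycle's cost lands in $[k, k+1)$ where $k$ is its arc count. Concretely, I would enumerate the arcs $e_1,\dots,e_{|E|}$ and set $c_{e_j} = 1 + \delta_j$ with $\delta_j := 2^{-j}/2^{|E|}$ or, to stay rational with a small encoding, $\delta_j := \varepsilon\, 2^{|E|-j}$ for a single small rational $\varepsilon$; the point is that the induced map from subsets of arcs (in particular, simple cycles) to the sum $\sum_{e\in C}\delta_e$ is injective, being essentially a binary expansion. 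The sum of all $\delta_j$ is strictly less than $1$, so for any cycle $C$ with $|C|$ arcs we get $|C| \le \vecc^T\mathbf{1}_C < |C|+1$, which is exactly the ``cost is at least its number of arcs'' and ``exceeds it by strictly less than one'' requirements.

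First I would make the injectivity precise: if $C \ne C'$ are two distinct arc sets, pick the largest index $j$ in their symmetric difference; then $\sum_{e\in C}\delta_e$ and $\sum_{e\in C'}\delta_e$ differ by at least $\delta_j$ minus the sum of all smaller $\delta_i$, which is positive by the geometric-series bound $\sum_{i>j}\delta_i < \delta_j$. Hence the perturbation totals are pairwise distinct over all arc subsets, a fortiori over all simple cycles, so all cycle costs in $G'$ are distinct. This also preserves the ``hierarchy'': since $\lfloor \vecc^T\mathbf{1}_C\rfloor = |C|$, a cycle with more arcs has strictly larger cost than one with fewer, and ties in arc count are broken consistently by the perturbation.

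Next I would check the bit-size bound. Each $\delta_j$ is a dyadic rational with numerator $1$ (or a power of two) and denominator $2^{2|E|}$ or so, hence has encoding length $O(|E|)$; there are $|E|$ of them, so $\vecc$ has total encoding length $O(|E|^2)$, which is polynomial in $\mathcal{I}_G$. Writing the $c_{e_j}$ over a common denominator $2^{2|E|}$ keeps everything integral-numerator and the whole description polynomial. Since $G'$ has the same underlying graph as $G$, its encoding length is $\mathcal{I}_G + O(|E|^2) = \poly(\mathcal{I}_G)$.

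The construction itself is entirely routine; the only thing to be careful about is the precise exponent so that (a) the perturbations are injective on subsets and (b) their total is strictly below $1$ — both are guaranteed by the geometric decay, so there is no real obstacle, just bookkeeping. I would state the lemma's three properties (distinct costs, cost $\ge$ arc count, cost $<$ arc count $+1$) as immediate consequences of $0 \le \sum_{e\in C}\delta_e < 1$ together with the injectivity claim, and finish by noting the polynomial encoding bound.
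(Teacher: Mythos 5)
Your proposal takes the same route as the paper: perturb the all-ones cost vector by a geometric sequence of dyadic rationals and argue injectivity of the perturbation sum on subsets via the bound $\sum_{i>j}\delta_i < \delta_j$, with the encoding size staying polynomial. One small slip: with your convention $\delta_j$ \emph{decreases} in $j$, so when two cycles $C \neq C'$ of the same size are compared you should pick the \emph{smallest} index in the symmetric difference (the one carrying the dominant weight $\delta_k$), not the largest; as written, ``pick the largest index $j$'' and ``differ by at least $\delta_j$ minus the sum of all smaller $\delta_i$'' do not line up, since the dominating term in the subset-sum difference is the largest $\delta$ present, which corresponds to the smallest index. The geometric-series estimate you cite is the right one once the index choice is corrected, and the paper's proof makes exactly this corrected choice.
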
 

\begin{proof}
Let $G=(V,E)$ be an unweighted directed graph. Let $n=|V|$ and $m=|E|$. First, 
we endow $G$ with weights to obtain the weighted graph $G'=(V,E,\vecc')$, where $c'_{ij}=1$ for all 
$(i,j)\in E$. In this graph, the cost of a cycle is measured through the number 
of arcs. Cycles with the same number of arcs have the same cost. To simplify notation, we will refer to a cycle interchangeably either as a subset of $E$ or as a $0/1$-vector $\vex$ with components $1$ precisely for the arcs on the cycle (a unit flow along the cycle); in particular, for two cycles $C_1, C_2 \subseteq E$ represented by vectors $\vex_1, \vex_2$, by $\vex_1 \setminus \vex_2$ we mean the arc set $C_1 \setminus C_2$. Note that 
$\mathcal{I}_{G'}$ is polynomial in $\mathcal{I}_G$: for each arc, only a 
(constant-size/single-bit) encoding of the number $1$ is needed.

We will prove the claim through a simple perturbation on $\vecc'$ to resolve any ties between cycles. The new, perturbed costs are called $\vecc$. We are going to show that the perturbation is efficient and changes $\mathcal{I}_{G'}$ only polynomially.

Let $\vecc=\vecc' + \vedelta$, where $\vedelta = (\delta_1,\dots,\delta_m)^T$ and $\delta_i = 2^{-i}$. Informally $\delta_1 = \frac{1}{2}$, $\delta_2 = \frac{1}{4}$, $\delta_3 = \frac{1}{8}$, and so on. Each $\delta_i$ can be encoded in at most $m+1$ bits, due to being the inverses of powers of $2$. Thus, each $c_i=c'_i+\delta_i$ can be encoded in at most $m+2$ digits and $\mathcal{I}_{G'} \leq (m+2) \mathcal{I}_{G}$. As $\mathcal{I}_G\geq m$, the change in encoding length is polynomial. Further, $\vecc$ can be constructed in polynomial time.

It remains to prove that all cycles in $G'$ are of different cost with respect to $\vecc$ and that the cost of cycles has increased by less than one. The latter is immediately clear from $\sum_{i=1}^m \delta_i <1$. Note that the number of arcs of a cycle is $\vecc'^T\vex$ and the cost with respect to $\vecc$ is $\vecc^T\vex$. Let $\vex_1$, $\vex_2$ be two cycles and assume $\vecc'^T\vex_1 > \vecc'^T\vex_2$, which in particular implies $\vecc'^T\vex_1 \geq \vecc'^T\vex_2 + 1$. As $\vecc^T\vex_2<\vecc'^T\vex_2+\sum_{i=1}^m \delta_i$ and $\sum_{i=1}^m \delta_i <1$, we have $\vecc^T\vex_1\geq \vecc'^T\vex_1 > \vecc^T\vex_2$.

Finally, consider two cycles $\vex_1\neq\vex_2$ with $\vecc'^T\vex_1 = \vecc'^T\vex_2$.  The cycles have the same number of arcs, so $\vex_1\backslash \vex_2\neq \emptyset$ and $\vex_2 \backslash \vex_1\neq \emptyset$. Let index $k$ be smallest among all arcs in $\vex_1\backslash \vex_2$, and let $l$ be smallest among all arcs used in $\vex_2 \backslash \vex_1$. Without loss of generality, assume $k < l$. Note $\delta_k > \sum_{i=k+1}^m \delta_i$. Thus $\vecc^T\vex_1-\vecc^T\vex_2\geq \delta_k - (\sum_{i=k+1}^m \delta_i) > 0$, i.e., $\vecc^T\vex_1>\vecc^T\vex_2$. This proves the claim.
\end{proof} 

\begin{remark}
It is natural to ask whether it is necessary to introduce numbers of exponential size into $\vecc$ in the Lemma above.
In other words, does every integer vector $\vecc$ which preserves exactly one optimum of $\vecc'$ and does not introduce any new optima have some entry of order $2^n$?
This is open, but observe that if we require something stronger, the answer is ``yes.''

We show that every integer $\vecc$ which breaks all ties between cycles of the same length must have exponential entries.
Clearly the number of cycles of length $n$ can be $\Omega(2^n)$.
Denote $c_{\max} = \|\vecc\|_\infty$.
In order to get a distinct value $\vecc^T \vex$ for every cycle $\vex$ of length $n$, $c_{\max} \in \Omega(2^n)$, as otherwise there are not enough distinct values $\vecc^T \vex$ since clearly $0 \leq \vecc^T \vex \leq n \cdot c_{\max}$.
\end{remark}



We are now ready to prove our main claim.

\begin{reptheorem}{thm:deepest-descent-unique}
Let $P = \{ \vex \in \R^n \mid A \vex = \veb, B \vex \leq \ved \}$ with 
$A,B\in \mathbb{R}^{m\times n}$, let $\vecc\in \mathbb{R}^n$ and  $\vex^0\in 
P$, and consider the LP $\min\{\vecc^T\vex: \vex\in P\}$. A deepest-descent 
circuit step $\vey$ in $P$ from~$\vex^0$ cannot be approximated 
within $O(n^{1-\epsilon})$ for any $\epsilon > 0$ in polynomial time, unless $\PP 
= \NP$. The hardness holds for LPs with unique optima, over $0/1$-polytopes, TU-polyhedra, or any 
combination thereof.
\end{reptheorem}

\begin{proof}
We will prove the claim through a reduction from the Directed Longest Cycle 
Problem (DLCP), for which it was shown in ~\cite{bhk-04} that no 
$|V|^{1-\epsilon}$-approximation can be computed for any $\epsilon>0$ in 
polynomial time, unless $\PP=\NP$, even in graphs of constant maximum 
out-degree $\Delta^+$. By 
\Cref{lem:unique-cost-cycles}, for a given graph $G=(V,E)$ it is possible to 
efficiently construct a weighted graph $G'=(V,E,\vecc)$ in which all cycles have a 
different cost and their cost lies strictly between the number of arcs of the 
cycle and that number plus one. The graph $G'$ can be used as input for a 
Directed Weighted Longest Cycle Problem (DWLCP) and also has constant maximum 
out-degree $\Delta^+$. If there was an efficient 
$|V|^{1-\epsilon}$-approximation for DWLCP on $G'$, then there would be an 
efficient $|V|^{1-\epsilon}$-approximation for DLCP on $G$. We will show that if 
there exists an algorithm to efficiently $O(n^{1-\epsilon})$-approximate \textsc{dd-SP}, then there exists an efficient 
$|V|^{1-\epsilon}$-approximation for DWLCP, and in turn DLCP -- a contradiction 
unless $P= NP$. Further, the move from $G$ to $G'$ will allow us to show that we 
retain this hardness even for LPs with unique optima.

Let $G=(V,E)$ be a directed graph underlying an instance of DLCP and 
$G'=(V,E,\vecc)$ the corresponding weighted directed graph with perturbed costs 
constructed as in \Cref{lem:unique-cost-cycles}, in turn an instance of DWLCP. 
Next, specify capacities $u_{ij}=1$ for each $(i,j)\in E$ to obtain a network 
$G''=(V,E,\vecc,\veu)$. The costs $c_{ij}$ remain unchanged for all $(i,j) \in E$, 
i.e., they are the same as in $G'$. 

This input can be used to specify a circulation problem. Recall that 
a circulation problem is a special case of a minimum-cost-flow problem and has a 
natural representation as an LP. Using the negative costs $-c_{ij}$ (recall the $c_{ij}$ are positive), we obtain  
\begin{align}
\tag{LP}
\begin{split}\label{LP}
\min \ &  -\vecc^T \vex \\
\text{s.t.} \  & A\vex = \ve0 \\
  & \ve0  \leq\vex \leq \ve 1,
\end{split}
\end{align}
where $A$ is the node-arc incidence matrix of $G'$, and $\ve0$ and $\ve1$ are 
vectors of all-zeros and  
all-ones of appropriate dimensions, respectively. The all-ones vector gives the 
capacity constraints. Let 
$P$ refer to the polyhedron forming the feasible region of~\eqref{LP}. As 
node-arc incidence matrices are totally unimodular, and as the right-hand side 
vectors are the integral $\ve0$ and $\ve1$, $P$ is a $0/1$-polytope in 
$\mathbb{R}^n$, with $n=|E|$. There always exists an optimal vertex to 
an LP on a bounded polytope, so an optimal objective function value for~\eqref{LP} is defined through 
a selection of arcs forming a circulation in $G'$. By the same argument as in 
\Cref{lem:unique-cost-cycles}, any subset of arcs sums up to a different total 
cost. Thus~\eqref{LP} has a unique optimal solution.

Next, consider a trivial feasible flow $\vex^0$ defined by $x^0_{ij}=0$ for 
each $(i,j)\in E$. We are going to show that an efficient approximation of the 
dd-step in $P$ from $\vex^0$ would imply an efficient approximation of DLCP.

Recall that the set of circuits of a node-arc incidence matrix $A$ corresponds 
precisely to the simple {\em undirected} cycles underlying the network; a corresponding vector $\veg \in \{-1,0,1\}$ would have entry $1$ for each directed arc used in the `correct' direction and $-1$ for each directed arc used in the `wrong', opposite direction~\cite{ykk-84,ort-05,bdf-16b}.  The same holds for the 
circuits of $P = \{x\in \mathbb{R}^n\mid A\vex = \ve0, \ve0  \leq\vex \leq \ve 1\}$, as the inequality constraints $\ve0  \leq\vex \leq \ve 1$ are represented through a constraint matrix $B=\binom{I}{-I}$, where $I$ is an identity matrix; recall Definition \ref{def:circuits}. 

Since we have $x^0_{ij}=0$ and $u_{ij}=1$ for each $(i,j)\in E$, the step 
length~$\alpha$ can always be set to $1$ for any valid circuit, i.e., if there 
exists $\alpha>0$ with $\vex^0+\alpha\veg \in P$ for a circuit $\veg$, then 
$\vex^0+\veg \in P$ and $\vex^0+\beta\veg \not\in P$ for any $\beta>1$. Further, any circuit $\veg$ with $\vex^0+\veg \in P$ can only have $0,1$ entries, as $x^0_{ij}=0$ for each $(i,j)\in E$. This means that edges can only be used in the correct direction. Therefore, an exact dd-step~$\vey$ for~\eqref{LP} from $\vex^0$ is in one-to-one 
correspondence to a simple {\em directed} cycle of maximum length (as~\eqref{LP} minimizes 
over negative arc costs). 

Recall that by the hardness result in~\cite{bhk-04},  we may assume that the 
maximum out-degree of $G'$ is some fixed constant $\Delta^+$, so in particular 
$|E|\leq \Delta^+|V|$. Assume we had an algorithm that for a given $\epsilon > 
0$ finds an $(n/\Delta^+)^{1-\epsilon}$-approximate dd-step $\vey_\epsilon$; let $\vey$ refer to an exact dd-step, and let the associated dd-improvements be denoted as $c_{\vey_\epsilon}$ and~$c_{\vey}$, 
respectively. Then we have that 
$\frac{c_{\vey}}{c_{\vey_\epsilon}}\leq 
(n/\Delta^+)^{1-\epsilon}=(|E|/\Delta^+)^{1-\epsilon}\leq |V|^{1-\epsilon}$. We 
know that $c_{\vey}=-\vecc^T\vey=-\vecc^T\veg$ and 
$c_{\vey_\epsilon}=-\vecc^T\vey_\epsilon=-\vecc^T(\alpha\veg_\epsilon)$ for some 
$\alpha\in(0,1]$ and some circuits $\veg$ and $\veg_\epsilon$. By the above, we 
may assume that $\alpha=1$, so $c_{\vey_\epsilon}=-\vecc^T(\veg_\epsilon)$.
Since 
$\frac{-\vecc^T\veg}{-\vecc^T\veg_\epsilon}=\frac{c_{\vey}}{c_{\vey_\epsilon}} 
\leq |V|^{1-\epsilon}$, $\veg_\epsilon$ corresponds to a cycle in $G''$ that 
approximates the longest cycle within a factor of $|V|^{1-\epsilon}$ (since by
construction of the cost vector $\vecc$, a cycle has maximum cost if and only 
if it has maximum length). This would imply a polynomial-time 
$|V|^{1-\epsilon}$-approximation algorithm for general DLCP.


The polytope we used in this construction is a $0/1$-polytope with a TU-matrix, 
and the LP at hand has a unique optimum and this fact is known apriori; see 
above. This shows that the hardness of approximation holds even for LPs adhering 
to all these restrictions. This proves the claim. 
\end{proof}

As an immediate consequence, we obtain \NP-hardness of \textsc{dd-SP} from this inapproximability result. 

\begin{reptheorem}{thm:deepest-descent-hardness}
\textsc{dd-SP} is \NPh, even for LPs over $0/1$-polytopes defined by a TU matrix and with a unique optimum.
 \end{reptheorem}

A direct proof of the \NP-hardness of \textsc{dd-SP} would be possible through 
a reduction from Hamiltonian cycle instead of DWLCP, following a similar line 
of arguments as in the proof of Theorem \ref{thm:deepest-descent-unique}. A 
perturbation of the arc costs would not be necessary, and neither would be the 
careful connection of $|V|$ and $|E|$ through the inapproximability of DWLCP for 
graphs with fixed maximum out-degree. However, to obtain the final part of 
Theorem \ref{thm:deepest-descent-hardness} -- that hardness persists even for 
LPs with unique optima -- one would have to reduce from a variant of Hamiltonian 
cycle where the underlying graph has a {\em unique} circulation with a maximal 
number of arcs and one has the apriori information that there exists such a 
circulation. (This property is what would guarantee the existence of a unique 
optimum in (\ref{LP}), and apriori knowledge thereof.) To the best of the 
authors' knowledge, hardness of this variant has not been studied yet in the 
literature. 


\section{Open Problems}\label{sec:open}
We conclude with two open problems related to our results. First, Theorem~\ref{thm:deepest-descent-approximation} shows how to $n$-approximate \textsc{dd-SP}. However, for the purposes of solving an LP using dd-steps, this is irrelevant, as the first step of the algorithm is to completely solve the LP itself. Is there a combinatorial $n$-approximation of \textsc{dd-SP}, i.e., an algorithm, which does not use the polynomial solvability of an LP as a black-box? Actually, this would yield a new algorithm for linear programming, so to make the question well-posed, we ask whether there is a combinatorial $n$-approximation of \textsc{dd-SP} for some non-trivial class of constraint matrices? Secondly, we have shown strong inapproximability of \textsc{dd-SP}. What are (natural) classes of LP instances for which \textsc{dd-SP} admits, e.g., $\log(n)$- or even $c$-approximation, for some constant $c \in \R_+$? Potential candidate classes include uni-- or bimodular LPs, and more generally, LPs with minors of bounded absolute value. Also, structurally restricted classes of LPs might be of interest. In particular, for $n$-fold LPs, which have a special block-structure, an approximation ratio for \textsc{dd-SP} polynomially depending only on the parameters (that is, block size) would be desirable, and would break below the barrier proved in this paper.

{\small\section*{Acknowledgments} 
\vspace*{-0.2cm}
\noindent S. Borgwardt gratefully acknowledges support of this work through NSF award 2006183 {\em Circuit Walks in Optimization}, Algorithmic Foundations, CCF, Division of Computing and Communication Foundations, and through Simons Collaboration Grant 524210 {\em Polyhedral Theory in Data Analytics} before. C. Brand was supported by project 0016976 of OP VVV call 02\_18\_053. M. Kouteck\'y and A. Feldmann were partially supported by Charles University project UNCE/SCI/004 and by the project 19-27871X of GA \u{C}R.}

\bibliographystyle{elsarticle-num}
\bibliography{literature}

\end{document}